\tikzstyle{place}=[draw,circle,minimum size=1.2mm,inner sep=1pt,outer sep=-1.1pt,fill=black]
\tikzset{>=latex,shorten >= 0.01cm,shorten <= 0.01cm}
\newcommand{\oast}{\circledast}	
\DeclareMathOperator{\sgn}{sgn}
\newcommand{\A}{\mathcal{A}}
\newcommand{\cA}{\mathcal{A}}\newcommand{\cT}{\mathcal{T}}
\newcommand{\cB}{\mathcal{B}}
\newcommand{\cV}{\mathcal{V}}
\newcommand{\cC}{\mathcal{C}}
\newcommand{\cW}{\mathcal{W}}
\newcommand{\ri}[1]{\mathop{\rm ri}(#1)} 
\newtheorem{theorem}{Theorem}[section] 
\newtheorem{corollary}[theorem]{Corollary}
\newtheorem{example}[theorem]{Example}
\newtheorem{remark}[theorem]{Remark}
\begin{document}
\title{Spectrally Arbitrary Pattern Extensions}
\thanks{Last updated: December 7, 2016}

\author{In-Jae Kim} 
\address{Department of Mathematics and Statistics, Minnesota State University, Mankato, MN, USA, 56001} 
\email{in-jae.kim@mnsu.edu}

\author{Bryan~L. Shader}
\address{Department of Mathematics, University of Wyoming, Laramie, WY, USA, 82071-3036} 
\email{bshader@uwyo.edu.}

\author{Kevin N. Vander~Meulen}
\address{Department of Mathematics, Redeemer University College, Ancaster, ON, Canada, L9K 1J4}
\email{kvanderm@redeemer.ca}

\author{Matthew West}
\address{Department of Mathematics, Redeemer University College, Ancaster, ON, Canada, L9K 1J4. Current address: TMV Control Systems Inc., Cambridge, ON, Canada, N1R 5R4}
\email{matthewmcwest@gmail.com}

\keywords{spectrally arbitrary patterns, inertially arbitrary patterns}  
\subjclass[2010]{15A18, 15A29, 15B35}

\begin{abstract}
A matrix pattern is often either a sign pattern with entries in $\{0, +,-\}$ or, more simply, a nonzero pattern
with entries in $\{ 0,* \}$. 
A matrix pattern $\cA$ is spectrally arbitrary if for any choice of a real matrix
spectrum, there is a real matrix having the pattern $\cA$ and the chosen spectrum. 
We describe a graphical technique, a triangle extension, for constructing spectrally arbitrary patterns out of some known 
lower order spectrally arbitrary patterns. These methods provide a new way of viewing some known spectrally
arbitrary patterns, as well as providing many new families of spectrally arbitrary patterns. We also
demonstrate how the technique can be applied to certain inertially arbitrary patterns
to obtain larger inertially arbitrary patterns. We then provide an additional extension method 
for zero-nonzero patterns, patterns with entries in $\{ 0, *, \oast\}$.
\end{abstract}
\maketitle 


\tikzstyle{place}=[circle,draw=black!100,fill=black!100,thick,inner sep=0pt,minimum size=1mm]
\tikzstyle{left}=[>=latex,<-,semithick]
\tikzstyle{right}=[>=latex,->,semithick]
\tikzstyle{nleft}=[>=latex,-,semithick]
\tikzstyle{nright}=[>=latex,-,semithick]
\tikzstyle{right2}=[-,semithick]

\section{Introduction}
 
The idea of exploring spectrally arbitrary patterns originated in \cite{Drew} in the context of  
inverse eigenvalue and eigenvalue completion problems. Since then,
many different papers that have described classes of spectrally arbitrary patterns
(for a survey of some of these, see~\cite{Catral}). 
Some papers have used a digraph to help describe the structure of the pattern (e.g. the star
digraphs~\cite{MTvdD}). Also,
some papers have used a digraph to observe necessary conditions on a spectrally
arbitrary pattern, such as the need to have a two-cycle in the digraph \cite{CV}. 
A nilpotent-centralizer method was recently introduced in \cite{GS} and used to demonstrate that
a certain tridiagonal pattern, having a digraph whose underlying graph is a path,
is spectrally arbitrary. This nilpotent-centralizer method is equivalent to the well-used nilpotent-Jacobian
method introduced in~\cite{Drew}; but this centralizer method avoids direct interaction with a Jacobian matrix.   

Here we introduce a digraph method for building spectrally arbitrary patterns from smaller
patterns, assuming the smaller patterns were demonstrated to be spectrally arbitrary via either the nilpotent-Jacobian method or  
the nilpotent-centralizer method. We start by describing formal definitions and the nilpotent-Jacobian method in Section~\ref{def}.
In Section~\ref{triangleSAP} we introduce a new method, called a triangle extension, that 
avoids the need to calculate a new Jacobian or centralizer.
Using this method, we can reconstruct some known families of spectrally arbitrary patterns,
as well as build many new families. Using recent work of \cite{CF} and \cite{CGKOVV}, we also apply these methods to inertially arbitrary patterns in Section~\ref{triangleIAP}. 
We conclude with an additional extension method for $\{0,*, \oast\}$-patterns in Section~\ref{znz}.

\section{Background definitions and the nilopotent-Jacobian method}\label{def}

A \emph{pattern} is any matrix $\A=[\A_{ij}]$ of order $n$ with entries in $\{0,+,-,*,\oast\}$.
In this paper we discuss nonzero patterns, sign patterns, and zero-nonzero patterns. A \emph{nonzero
pattern} $\A$ is a pattern with entries in $\{ 0, * \}$. 
A real matrix $A=[a_{ij}]$ has nonzero pattern $\A$, and we say $A\in \A$, if $a_{ij}=0$ whenever $\A_{ij}=0$ and $a_{ij}$ is nonzero whenever $\A_{ij}=*$.
A \emph{sign pattern} $\A$ is a pattern with entries in $\{ 0, +, - \}$. 
A real matrix $A=[a_{ij}]$ has sign pattern $\A$, and we say $A\in \A$, 
if $a_{ij}=0$ whenever $\A_{ij}=0$,  $a_{ij}>0$ if $\A_{ij}=+$, and
$a_{ij}<0$ if $\A_{ij} =-$. 
In the last section we extend our results to \emph{zero-nonzero patterns}, matrices
with entries in $\{0,*, \oast\}$. A real matrix $A=[a_{ij}]$ has zero-nonzero pattern $\A$, 
and we say $A\in \A$, if $a_{ij}=0$ whenever $\A_{ij}=0$ and $a_{ij}$ is nonzero 
whenever $\A_{ij}=*$. Note that, if $\A_{ij}=\oast$ then there is no restriction on the real number $a_{ij}$
for a matrix $A\in \A$.

The digraph $D(\A)$ of a pattern has vertices $\{ 1,2,\ldots,n\}$ with an arc $(i,j)$ from
vertex $i$ to vertex $j$ precisely when $\A_{ij}\neq 0$. The \emph{graph} of a pattern $\cA$ is the underlying simple graph
of $D(\A)$.
A (simple) \emph{cycle} in a pattern $\A$ (or in a matrix $A$ with pattern $\A$) corresponds to a cycle in the digraph $D(\A)$. 
For $k\geq 1$, we say a  \emph{$($simple$)$ $k$-cycle} of $A$ is a nonzero product 
$a_{i_1,i_2}a_{i_2,i_3}\cdots a_{i_{k-1},i_k}a_{i_k,i_1}$ 
with $k$ distinct indices $\{i_1,i_2,\ldots,i_k\}$. We call a $1$-cycle a \emph{loop}.
A \emph{composite $k$-cycle} is a product of disjoint cycles of $A$ using 
nonzero entries from exactly $k$ rows (and corresponding columns) of $A$.

The \emph{sign} of a simple $k$-cycle is $(-1)^{k-1}$. The sign of a composite cycle is the 
product of the signs
of its simple cycles. The next observation (see e.g.~\cite[Section 9.1]{BR}) has been useful in analyzing the 
implications of the combinatorial 
structure of a matrix $A$. In particular, 
the characteristic polynomial of $A$ is of the form
\begin{equation}\label{charpoly}
p_A(z)=z^n-E_1z^{n-1}+E_2z^{n-2}-\cdots +(-1)^nE_n
\end{equation}
where $E_k$ is the sum of all signed $k$-cycles (simple and composite).

Let $\A$ be an $n \times n$ sign pattern, and suppose that
there exists a nilpotent matrix $A \in \A$ with 
$m\geq n$ nonzero entries, say $a_{i_1,j_1},\ldots,a_{i_m,j_m}$.  
Let
$X=X(x_1,\ldots,x_m)$ denote the matrix obtained from $A$ with the $(i_k,j_k)$-th position
replaced with the variable $x_k$ for $k=1,\ldots,m$, and let
\[
{p}_{X}(z) = z^n + f_1z^{n-1} + f_2z^{n-2}+
\cdots + f_{n-1}z + f_n\]
be the characteristic polynomial of $X(x_1,\ldots,x_m)$, where each
$f_i = f_i(x_1,\ldots,x_m)$ is a polynomial in $x_1,\ldots,x_m$.
Let $J=J_X$ be the Jacobian matrix with
$J_{i,j}=\frac{\partial f_i}{\partial x_j},$
for $1\leq i \leq n$, and $1\leq j \leq m$.
Setting  $\mathbf{x} = (x_1,\ldots,x_m)$ and $\mathbf{a}=(a_{i_1,j_1},\ldots,a_{i_m,j_m})$,
we let 
$J_{X=A}=J|_{\mathbf{x}=\mathbf{a}}$
 denote the Jacobian matrix evaluated at 
${\mathbf{x}=\mathbf{a}}$.
The \emph{{nilpotent-Jacobian method}} is to seek a nilpotent matrix $A\in \A$ for which 
$J_{X=A}$ has rank $n$. In this case, we say that $A$ \emph{allows a full-rank Jacobian}. 
The following theorem, first developed in~\cite{Drew}, shows
that $\A$ is spectrally arbitrary if such a realization is found.  

Recall that $\cB$ is a \emph{superpattern} of a pattern $\A$ if $\A_{i,j}\neq 0$ implies $\cB_{i,j}=\A_{i,j}$.
Note that $\A$ is a superpattern of itself.

\begin{theorem}[\cite{Drew}]\label{NJ}
If a nilpotent matrix $A\in \cA$ allows a full-rank Jacobian, 
then every superpattern of $\mathcal{A}$ is spectrally arbitrary.
\end{theorem}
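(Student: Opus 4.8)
The plan is to apply the Implicit Function Theorem to the map $F:\R^m\to\R^n$ given by $F(\mathbf{x})=(f_1(\mathbf{x}),\ldots,f_n(\mathbf{x}))$, where the $f_i$ are the coefficients of the characteristic polynomial of $X(x_1,\ldots,x_m)$ described above. Since $A$ is nilpotent, $p_A(z)=z^n$, so $F(\mathbf{a})=\mathbf{0}$. The hypothesis that $A$ allows a full-rank Jacobian says precisely that $DF(\mathbf{a})=J_{X=A}$ has rank $n$; since $m\geq n$, after relabeling we may assume the submatrix formed by the first $n$ columns is invertible. First I would fix the remaining $m-n$ variables at their values in $\mathbf{a}$ and view $F$ as a map $\R^n\to\R^n$ in the first $n$ variables; the Inverse Function Theorem then gives an open neighborhood $U$ of $\mathbf{a}'\in\R^n$ (the first $n$ coordinates of $\mathbf{a}$) that maps diffeomorphically onto an open neighborhood $V$ of $\mathbf{0}\in\R^n$.

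Next I would translate this into the spectral statement. Given any desired real spectrum, encode it as a monic real polynomial $q(z)=z^n+c_1z^{n-1}+\cdots+c_n$ with the prescribed roots; the target is the coefficient vector $\mathbf{c}=(c_1,\ldots,c_n)$. The issue is that $\mathbf{c}$ need not lie in the neighborhood $V$, so I would use a scaling argument: for a parameter $t>0$, replace the matrix $X$ by $tX$, which has characteristic polynomial with coefficients $t^i f_i(\mathbf{x})$, i.e.\ it scales the spectrum by $t$. Equivalently, the coefficient $c_i$ of the target polynomial for the scaled spectrum is $t^{-i}c_i$ relative to the unscaled realization. Choosing $t$ large enough forces the rescaled target vector into $V$; solving via the diffeomorphism produces $\mathbf{x}\in U$ with $F(\mathbf{x})$ equal to the rescaled target, and then $t$ times the corresponding matrix has the originally desired spectrum. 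I also need $U$ small enough that every point of $U\times\{\text{fixed coordinates}\}$ still has all its nonzero entries nonzero (and, in the sign pattern case, of the correct sign) — this holds because nonvanishing and sign are open conditions and $\mathbf{a}$ already satisfies them, so shrinking $U$ suffices.

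Finally I would extend from $\cA$ to an arbitrary superpattern $\cB$. If $\cB$ is a superpattern of $\cA$, then every matrix in $\cA$ can be perturbed into $\cB$ by assigning small nonzero (appropriately signed) values to the positions where $\cB$ is nonzero but $\cA$ is zero; more directly, one runs the entire argument above with the additional variables corresponding to the extra nonzero positions of $\cB$, evaluated at $0$ in $\mathbf{a}$ — the Jacobian $J_{X=A}$ for $\cB$ contains $J_{X=A}$ for $\cA$ as a submatrix (restricting to the columns of the original $m$ variables), hence still has rank $n$, and the same Inverse Function Theorem argument applies while keeping the new variables fixed at values near $0$ but nonzero with the right sign. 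This simultaneously handles the superpattern claim and shows $\cA$ itself (being its own superpattern) is spectrally arbitrary.

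The main obstacle is the scaling step: one must check carefully that the homogeneity degrees work out — that $f_i$ is, after the substitution $X\mapsto tX$, exactly $t^i$ times the original $f_i$ (this follows because $f_i$ is, up to sign, the sum of the $i\times i$ principal minors, hence homogeneous of degree $i$ in the entries of $X$) — and that a single $t$ can be chosen to pull the target into $V$ for the given finite list of target coefficients. Everything else is a routine application of the Inverse (or Implicit) Function Theorem together with the openness of the nonzero/sign constraints.
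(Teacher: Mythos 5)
The paper does not prove Theorem~\ref{NJ}; it cites \cite{Drew} and uses the result as a black box, so there is no ``paper's proof'' to compare against. Your reconstruction is the standard argument (the same one used in \cite{Drew}), and it is correct in outline: apply the Inverse Function Theorem at the nilpotent point, use the degree-$i$ homogeneity of $f_i$ (each $f_i$ is $\pm$ the sum of $i\times i$ principal minors) to rescale any target coefficient vector into the accessible neighborhood via $X\mapsto tX$, and invoke openness of the sign/nonvanishing conditions to stay within the pattern class.

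The one step that needs more care is the superpattern reduction. Once you fix the new variables (the positions where $\cB$ is nonzero but $\cA$ is zero) at small \emph{nonzero} values, the resulting matrix is no longer nilpotent, so $F$ at that base point equals some small nonzero vector rather than $\mathbf{0}$. Applying the Inverse Function Theorem centered there yields a neighborhood of that nonzero value, which need not contain $\mathbf{0}$, and hence the rescaling step may fail. The clean fix is the parametrized Implicit Function Theorem: pick the $n$ pivot variables from among the original $m$ positions of $\cA$ (possible since $J_{X=A}$ already has rank $n$), regard the remaining $m'-n$ positions of $\cB$ as parameters, and solve $F(p,q)=w$ for $p$ near $p_0$ simultaneously in $q$ near $q_0$ and $w$ near $\mathbf{0}$. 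This gives uniform neighborhoods $U_q\ni q_0$ and $W\ni\mathbf{0}$ such that every $(q,w)\in U_q\times W$ admits a pivot solution $p$ near $p_0$; choosing $q$ with the new coordinates nonzero (and correctly signed) and then running the scaling argument on $w$ finishes the proof. Your phrase ``keeping the new variables fixed at values near $0$ but nonzero'' gestures at this, but as written it does not guarantee that $\mathbf{0}$ stays in the reachable set after the perturbation; you need the solution map to vary continuously in the parameters, not just the Inverse Function Theorem at a single point.
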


\section{Triangle extensions of spectrally arbitrary sign patterns}\label{triangleSAP}

If $G$ is a digraph having an arc $(u,v)$, $u\neq v$,   
such that there is a loop at $u$ 
then a \emph{{triangle extension}} of $G$ at $(u,v)$ is the graph $H$ obtained from $G$
by inserting a new vertex $w$, moving the loop to $w$, 
and inserting arcs $(u,w)$ 
and $(w,v)$ as in Figure~\ref{tri}. \begin{figure}
\begin{center}
\begin{tabular}{cc}
\begin{tikzpicture}
    \node (1) at (1,0)[place]{};
    \node (1b) at (1,-0.3){$u$}; 
    \node (2) at (3,0)[place]{}; 
    \node (2b) at (3,-0.3){$v$};
\draw [->] (1) to  (2);
\path[every node] (1)       edge[loop]node{}();
\end{tikzpicture}
\qquad & \qquad
\begin{tikzpicture}
    \node (1) at (1,0)[place]{}; 
    \node (1b) at (1,-0.3){$u$};
    \node (3) at (2.1,1)[place]{}; 
    \node (2) at (3.2,0)[place]{}; 
    \node (2b) at (3.2,-0.3){$v$};
    \node (3b) at (2.4, 1){$w$};
\draw [->] (1) to (2);
\draw[->] (1) to  (3);
\draw[->] (3) to  (2);
\path[every node] (3) edge[loop]node{}();
\end{tikzpicture}
\end{tabular}
\end{center}
\caption{A triangle extension along arc $(u,v)$.}\label{tri}
\end{figure}
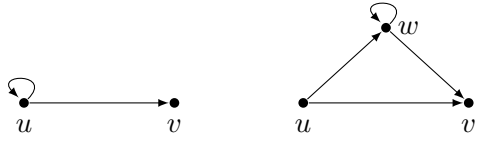
If $\cA$ and $\cB$ are patterns corresponding
to the digraphs $G$ and $H$ respectively, we will say $\cB$ is a \emph{triangle extension}
of $\cA$ at $(u,v)$. If $\cB$ is a sign-pattern, then $\cB$ is a \emph{signed} triangle extension 
of $\cA$ if, in addition, $\sgn(\cB_{u,w})\sgn(\cB_{w,v})=\sgn(\cA_{u,u})\sgn(\cA_{u,v})$.
Observe that any composite cycle of $G$ will also be a composite cycle of $H$ (replacing the loop at $u$ by the loop at $w$ as necessary).  But $H$ can have some additional cycles.

\begin{theorem}\label{triangle}
Suppose $\cA$ is an order $n$ sign pattern and $A\in\A$ is a nilpotent matrix that allows a full-rank Jacobian. 
Let $(u,v)$, $u\neq v$,  be an arc on a composite $n$-cycle of $D(\cA)$ with a loop at $u$.
Suppose  
\begin{enumerate}
\item[(i)] that each cycle of $D(\cA)$ that contains $u$ 
also contains $(u,u)$ or $(u,v)$, and 
\item[(ii)] the sum of the weights of the composite $n$-cycles using edge $(u,v)$ in $A$ is nonzero.
\end{enumerate}
If  $\cB$ is a signed triangle extension of $\cA$ at $(u,v)$, 
then every superpattern of $\cB$ is spectrally arbitrary. 
\end{theorem}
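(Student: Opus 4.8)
The plan is to apply Theorem~\ref{NJ} to $\cB$ directly: I will produce a nilpotent matrix $B\in\cB$ that allows a full-rank Jacobian, and then every superpattern of $\cB$ is spectrally arbitrary. The natural candidate for $B$ is obtained from the given nilpotent $A$ by ``splitting'' the loop at $u$ across the new triangle $u\to w\to v$. Concretely, I would set $b_{w,w}=a_{u,u}$, keep $b_{u,v}$ as a free new parameter (or initially equal to $a_{u,v}$), and choose $b_{u,w}$ and $b_{w,v}$ so that the relevant products match what $A$ contributed; the hypothesis $\sgn(\cB_{u,w})\sgn(\cB_{w,v})=\sgn(\cA_{u,u})\sgn(\cA_{u,v})$ is exactly what makes this sign-consistent with $\cB$, and all other entries of $B$ agree with $A$. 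The first thing to check is that this $B$ is nilpotent with the pattern $\cB$. For nilpotency I would compare characteristic polynomials via the cycle expansion~\eqref{charpoly}: by the observation following Figure~\ref{tri}, every composite cycle of $D(\cA)$ lifts to one of $D(\cB)$ of the same length and (by the signed-extension condition together with the choice of entries of $B$) the same weight, after the substitution of the loop at $u$ by the path $u\to w\to v$ when needed; the genuinely new cycles of $D(\cB)$ are those that traverse $w$ via $(u,w),(w,v)$ \emph{without} using the loop at $w$, and one must see that their contributions can be arranged to cancel or be absorbed. Hypothesis (i) is what controls this: any cycle through $w$ in $D(\cB)$ must enter by $(u,w)$ and leave by $(w,v)$, hence corresponds to a cycle through $u$ in $D(\cA)$ using $(u,v)$; by (i) such a cycle cannot also be a loop at $u$, so the bookkeeping of which old cycles get ``replaced'' is unambiguous, and one gets $p_B(z)=z\cdot p_A(z)/?$—more precisely $p_B(z)=z^{n+1}$ follows from $p_A(z)=z^n$ once the new-cycle sums are shown to match the coefficient shift. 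I expect to organize this as: $E_k(B)=E_{k-1}(A)\cdot(\text{loop contribution})+\cdots$, and reduce everything to $E_k(A)=0$.

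Next I would verify the full-rank Jacobian for $B$. Here the strategy is to exhibit an explicit invertible submatrix of the $(n+1)\times m'$ Jacobian $J_{X=B}$, where $m'$ is the number of nonzero entries of $B$. The key point is that $\cB$ has two ``new'' variable positions relative to $\cA$ (the arcs $(u,w)$ and $(w,v)$ replace, in effect, the old loop position $(u,u)$, but we also gain $(w,w)$), so $m'=m+2$. I would choose to differentiate with respect to the same $m$ variables that witnessed the full rank of $J_{X=A}$ — or rather their images under the splitting — together with one of the two new variables among $\{x_{u,w},x_{w,v}\}$, and argue that the resulting $(n+1)\times(n+1)$ minor is, up to a nonzero scalar and elementary row/column operations, block-triangular with one block equal to the old full-rank minor and the other block a nonzero $1\times1$ block coming from the top coefficient of the new cycle through $w$; this last nonzero scalar is precisely where hypothesis (ii) — the sum of weights of composite $n$-cycles using $(u,v)$ in $A$ is nonzero — enters, since that sum is (up to sign) the derivative of the relevant $f_i$ with respect to the new ``$w$-loop-bypassing'' variable, evaluated at $B$. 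Chasing how the coefficients $f_i$ of $p_X(z)$ change under the triangle extension — writing $f_i^{(\cB)}$ in terms of $f_i^{(\cA)}$ and the new variables, then differentiating — is the crux and the most computation-heavy part of the argument.

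The main obstacle, as I see it, is the careful accounting of cycles in Step~1 and the transfer of the Jacobian rank in Step~2: one must pin down exactly which composite $n$-cycles of $A$ persist, which are ``rerouted'' through $w$, and which new composite $(n+1)$-cycles appear in $B$, and then show that after restricting the Jacobian to an appropriate set of $n+1$ columns the new block decouples from the old one. Hypotheses (i) and (ii) are the levers that make this decoupling work — (i) guarantees the combinatorial structure near $u$ is simple enough that rerouting is well-defined and injective, and (ii) guarantees the scalar measuring sensitivity to the new arc is nonzero — so the proof will hinge on translating these two hypotheses into the two analytic facts (nilpotency is preserved; one extra independent Jacobian column is created) that Theorem~\ref{NJ} then converts into the conclusion.
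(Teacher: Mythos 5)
Your plan follows the paper's proof in its essentials: build a nilpotent $B\in\cB$ by transplanting the loop weight $a_{u,u}$ to $w$ and choosing the new arc weights so that $b_{u,w}b_{w,v}=a_{u,u}\,b_{u,v}$; verify nilpotency of $B$ via the cycle expansion~\eqref{charpoly}; and show the Jacobian at $B$ is column equivalent to a block triangular matrix with the old full-rank block $J_{X=A}$ and a nonzero $1\times 1$ block, the latter furnished by hypothesis~(ii). Your reading of (ii) is exactly right.

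Your explanation of hypothesis~(i), however, is off. You say that a cycle through $w$ ``cannot also be a loop at $u$'' by~(i) --- but that is vacuously true of any cycle of length $\geq 3$ and is not what (i) is for. What (i) actually forbids is a simple cycle $\tau$ through $u$ that uses neither $(u,u)$ nor $(u,v)$. If such a $\tau$ existed, then in $D(\cB)$ the new loop $(w,w)$ is vertex-disjoint from $\tau$, so $(w,w)\cdot\tau\cdot\gamma$ is a composite cycle of $D(\cB)$ with no $D(\cA)$-counterpart (there $(u,u)$ and $\tau$ overlap at $u$) and no ``rerouted'' partner to cancel against; the bookkeeping you flag as the crux would then fail to close. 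The paper's device, which your sketch should converge to, is that thanks to~(i) every composite cycle of $D(\cB)$ either lifts one of $D(\cA)$ verbatim (producing the $zp_X(z)$ term) or pairs a composite using $(w,w)$ together with a $(u,v)$-cycle against its rerouting through $w$; each such pair carries a common factor $(x_{m+1}-dk)$, where $k=Y_{w,w}$, $d=Y_{u,v}$, $x_{m+1}=Y_{u,w}$, and $Y_{w,v}$ is pinned at $1$. Substituting $x_{m+1}=dk$ (the value at $B$) kills those terms, giving $p_B=zp_A$, and the same factor makes the Jacobian column reduce to the block triangular form you anticipated, with $S_{n+1}$ (essentially the sum in~(ii)) as the decoupled pivot. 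With that pairing argument written out, your plan becomes the paper's proof.
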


\begin{proof} 
Labeling the vertices of $\A$ as $1,2,\ldots, n$, assume $u$ and $v$ are vertices $n$ and $n-1$ respectively.

Let $A\in\cA$ be a nilpotent matrix and $X$ be a matrix having variables 
in $m$ positions, labeled $x_1,x_2,\ldots,x_m$ 
such that the Jacobian $J_{X=A}$ has rank $n$. 
For simplicity, we let $k=X_{n,n}$ and $d=X_{n,n-1}$.

Let $\cB$ be a signed-triangle extension of $\cA$. Without loss of generality, assume $\cB_{n+1,n}=+$.
Let $B\in\cB$ with $B_{ij}=A_{ij}$ for $1\leq i,j\leq n$, except $B_{n,n}=0$, 
 $B_{n+1,n+1}=A_{n,n}$, $B_{n+1,n-1}=1$ and $B_{n,n+1}=A_{n,n-1}A_{n,n}$. 
Let $Y$ be obtained from $B$ by placing the variables $x_1,x_2,\ldots,x_m$ in the 
same positions as they appear in $X$ (except for $X_{n,n}$) with $Y_{n+1,n+1}=k$. Further let $Y_{n,n+1}=x_{m+1}$. 
Since any composite cycle of $D(\cA)$ will appear as a composite cycle of $D(\cB)$, 
it follows from~(\ref{charpoly}) that many summands of the characteristic polynomial $p_Y$ can be obtained from
$p_X$. Using condition~(i), we know the remaining summands can be paired based on composite cycles
that use arc $(u,v)$ and those that use $(w,w)$. Thus
\begin{equation}\label{charY}
{p}_{Y}(z) = zp_X(z) + 
\sum_{r=3}^{n}S_r(x_{m+1}-dk)z^{n-r+1} +S_{n+1}(x_{m+1}-dk)
\end{equation}
for some polynomials $S_r$, $3\leq r\leq n+1$, of the variables $x_1,\ldots,x_m$ (not including $d$ and $k$). 
Using (\ref{charY}), we can see that $B$ is nilpotent since $A$ is nilpotent and $x_{m+1}=dk$ in $B$.

In considering the Jacobian matrix $J_Y$ , we may assume that
$Y_{n,n-1}=x_{m-1}$ and $Y_{n+1,n+1}=x_{m}$. 
In this case, the Jacobian  matrix $J_Y$ is of the form:
$$J_Y=\left[\begin{array}{cc}
J_X & \mathbf{0}\\
\mathbf{0}^T&0
\end{array}\right]
+(x_{m+1}-dk)M+ \mathbf{s} \mathbf{t}^T
$$
for some matrix $M$, $\mathbf{s}^T=[0,0,S_3,\ldots,S_{n+1}]$ and $\mathbf{t}^T=[0,\ldots,0, -k,-d,1]$.
Note that when considering $J_{Y=B}$, we can ignore $M$ since $x_{m+1}=dk$ in $B$. Further, 
by condition~(ii), we know that $S_{n+1}\neq 0$.  
It now follows that $J_{Y=B}$ is column equivalent to a block  triangular matrix with block $J_{X=A}$ and block $[1]$.
Thus, by Theorem~\ref{NJ}, every superpattern of $\cB$ is spectrally arbitrary.
\end{proof}

\begin{remark}\label{det}{\rm 
The condition (i) of Theorem~\ref{triangle} is equivalent to requiring 
that $u$ is the only vertex in
the strong component of $D(A)-(u,v)$ that contains $u$ (for the definition of strong component, see e.g.~\cite[p.54]{BR}).
Further, if we let $A(u,v)$ denote the matrix obtained from $A$ by deleting row $u$ and column $v$, then 
condition (ii) of Theorem~\ref{triangle} is equivalent to saying that 
$\det{A(u,v)} \neq 0$.
}\end{remark}

\begin{remark}{\rm 
While a triangle extension is defined for a loop at the tail of $(u,v)$, it could just as well have been
defined using a loop at the head: this can be observed by noting such is an equivalent pattern via transposition. Thus a loop
could be moved from either $u$ or $v$ when creating a triangle extension.} 
\end{remark}

\begin{remark}{\rm
Focusing on the loop at $w$, it can be observed that the new arcs $(u,w)$ and $(w,v)$ 
constructed via triangle extension using Theorem~\ref{triangle} will automatically
satisfy the conditions of the theorem if applied to the new pattern. Thus, starting with an irreducible spectrally arbitrary 
pattern $\cA$ of order $n$ and using Theorem~\ref{triangle}, one could recursively construct various patterns of any 
order $m\geq n$. 
}
\end{remark}

\begin{example} {\rm
For $n\geq 4$, 
the lower Hessenberg pattern $\cV_n$ in~\cite{Britz} can be seen to be a signed spectrally arbitrary triangle extension of $\cV_{n-1}$.
(In fact, $\cV_3$ is a triangle extension
of $\cT_2$ described below.)}
\end{example}

In~\cite{D}, it was shown that a certain tridiagonal pattern $\cT_n$, whose graph is a path with loops on the ends, 
is potentially nilpotent. Then~\cite{GS} demonstrated that
$\cT_n$ is spectrally arbitrary using a new technique equivalent to the nilpotent-Jacobian method.
There are a variety of ways of recursively constructing classes of spectrally arbitrary patterns 
via triangle extension starting with $\cT_n$. 
To simplify our next examples, we will ignore signs and describe some nonzero patterns obtained recursively
by triangle extension.

\begin{example}{\rm 
Starting with $\cT_2$, one can recursively construct a class $\cT_{2,n}$, $n\geq 1$,  of spectrally arbitrary patterns of order $n+2$ whose graph looks like a triangulated beam. Several examples are drawn in Figure~\ref{C3}. Note that these provide pentadiagonal spectrally arbitrary patterns if, for example, we label the top vertices consecutively with odd integers and the bottom vertices with even integers. 
\begin{figure}[ht]
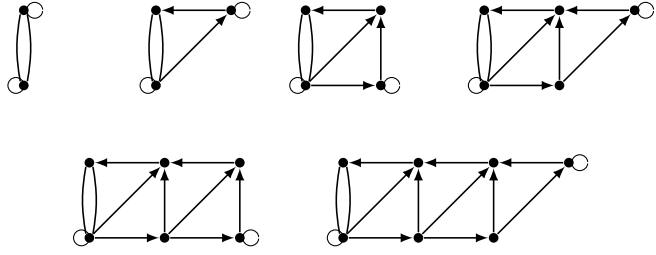

\begin{center}
\tikzpicture
\node (1) at (-0.5,0.5)[place] {};
\node (4) at (-0.5,-0.5)[place] {};
\draw [-] (-0.5,-0.5) arc (360:0:3pt);
\draw [-] (-0.25, 0.5) arc (360:0:3pt);
\draw [nleft] (1.south west) to [bend right=10] (4.north west);
\draw [nleft] (4.north east)to [bend right=10] (1.south east);
\endtikzpicture
\hspace{3em}
\tikzpicture
\node (1) at (-0.5,0.5)[place] {};
\node (2) at (0.5,0.5)[place] {};
\node (4) at (-0.5,-0.5)[place] {};
\draw [right] (2) to (1);
\draw [right] (4) to (2);
\draw [-] (-0.5,-0.5) arc (360:0:3pt);
\draw [-] (0.75,0.5) arc (360:0:3pt);
\draw [nleft] (1.south west) to [bend right=10] (4.north west);
\draw [nleft] (4.north east)to [bend right=10] (1.south east);
\endtikzpicture
\hspace{1em}
\tikzpicture
\node (1) at (-0.5,0.5)[place] {};
\node (2) at (0.5,0.5)[place] {};
\node (3) at (0.5,-0.5)[place] {};
\node (4) at (-0.5,-0.5)[place] {};
\draw [right] (2) to (1);
\draw [right] (3) to (2);
\draw [right] (4) to (2);
\draw [right] (4) to (3);
\draw [-] (-0.5,-0.5) arc (360:0:3pt);
\draw [-] (0.75,-0.5) arc (360:0:3pt);
\draw [nleft] (1.south west) to [bend right=10] (4.north west);
\draw [nleft] (4.north east)to [bend right=10] (1.south east);
\endtikzpicture
\hspace{2em}
\tikzpicture
\node (1) at (-0.5,0.5)[place] {};
\node (2) at (0.5,0.5)[place] {};
\node (3) at (0.5,-0.5)[place] {};
\node (4) at (-0.5,-0.5)[place] {};
\node (6) at (1.5,0.5)[place] {};
\draw [right] (3) to (6);
\draw [right] (6) to (2);
\draw [-] (1.75,0.5) arc (360:0:3pt);
\draw [right] (2) to (1);
\draw [right] (3) to (2);
\draw [right] (4) to (2);
\draw [right] (4) to (3);
\draw [-] (-0.5,-0.5) arc (360:0:3pt);
\draw [nleft] (1.south west) to [bend right=10] (4.north west);
\draw [nleft] (4.north east)to [bend right=10] (1.south east);
\endtikzpicture

\vspace{2em}
\tikzpicture
\node (1) at (-0.5,0.5)[place] {};
\node (2) at (0.5,0.5)[place] {};
\node (3) at (0.5,-0.5)[place] {};
\node (4) at (-0.5,-0.5)[place] {};
\node (5) at (1.5,-0.5)[place] {};
\node (6) at (1.5,0.5)[place] {};
\draw [right] (3) to (5);
\draw [right] (3) to (6);
\draw [right] (6) to (2);
\draw [right] (5) to (6);
\draw [right] (2) to (1);
\draw [right] (3) to (2);
\draw [right] (4) to (2);
\draw [right] (4) to (3);
\draw [-] (-0.5,-0.5) arc (360:0:3pt);
\draw [nleft] (1.south west) to [bend right=10] (4.north west);
\draw [nleft] (4.north east)to [bend right=10] (1.south east);
\draw [-] (1.75,-0.5) arc (360:0:3pt);
\endtikzpicture
\hspace{2em}
\tikzpicture
\node (1) at (-0.5,0.5)[place] {};
\node (2) at (0.5,0.5)[place] {};
\node (3) at (0.5,-0.5)[place] {};
\node (4) at (-0.5,-0.5)[place] {};
\node (5) at (1.5,-0.5)[place] {};
\node (6) at (1.5,0.5)[place] {};
\node (7) at (2.5,0.5)[place] {};
\draw [right] (3) to (5);
\draw [right] (3) to (6);
\draw [right] (6) to (2);
\draw [right] (5) to (6);
\draw [right] (5) to (7);
\draw [right] (7) to (6);
\draw [right] (2) to (1);
\draw [right] (3) to (2);
\draw [right] (4) to (2);
\draw [right] (4) to (3);
\draw [-] (-0.5,-0.5) arc (360:0:3pt);
\draw [nleft] (1.south west) to [bend right=10] (4.north west);
\draw [nleft] (4.north east)to [bend right=10] (1.south east);
\draw [-] (2.75,0.5) arc (360:0:3pt);
\endtikzpicture
\end{center} 
\caption{$\cT_2$ and consecutive spectrally arbitrary triangle extensions $\cT_{2,1}$, $\cT_{2,2}$, $\cT_{2,3}$, $\cT_{2,4}$ and $\cT_{2,5}$.}\label{C3}
\end{figure}
}\end{example}

\begin{example}{\rm 
Starting with $\cT_n$, $n\geq 2$, one can recursively construct a class $\cT_{n,m}$, $m\geq 1$ of spectrally arbitrary patterns of order $n+m$ whose graph looks like a triangulated beam extension of $\cT_n$: three examples appear in Figure~\ref{BNM}. 
\begin{figure}[ht]
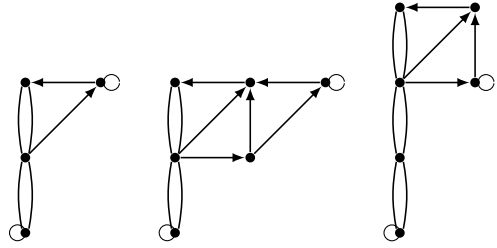

\begin{center}
\tikzpicture
\node (1) at (-0.5,0.5)[place] {};
\node (2) at (0.5,0.5)[place] {};
\node (4) at (-0.5,-0.5)[place] {};
\node (8) at (-0.5,-1.5)[place]{};
\draw [right] (2) to (1);
\draw [right] (4) to (2);
\draw [-] (-0.5,-1.5) arc (360:0:3pt);
\draw [-] (0.75,0.5) arc (360:0:3pt);
\draw [nleft] (1.south west) to [bend right=10] (4.north west);
\draw [nleft] (4.north east)to [bend right=10] (1.south east);
\draw [nleft] (4.south west) to [bend right=10] (8.north west);
\draw [nleft] (8.north east)to [bend right=10] (4.south east);
\endtikzpicture
\hspace{1em}
\tikzpicture
\node (1) at (-0.5,0.5)[place] {};
\node (2) at (0.5,0.5)[place] {};
\node (3) at (0.5,-0.5)[place] {};
\node (4) at (-0.5,-0.5)[place] {};
\node (6) at (1.5,0.5)[place] {};
\node (8) at (-0.5,-1.5)[place] {};
\draw [right] (3) to (6);
\draw [right] (6) to (2);
\draw [-] (1.75,0.5) arc (360:0:3pt);
\draw [right] (2) to (1);
\draw [right] (3) to (2);
\draw [right] (4) to (2);
\draw [right] (4) to (3);
\draw [-] (-0.5,-1.5) arc (360:0:3pt);
\draw [nleft] (1.south west) to [bend right=10] (4.north west);
\draw [nleft] (4.north east)to [bend right=10] (1.south east);
\draw [nleft] (4.south west) to [bend right=10] (8.north west);
\draw [nleft] (8.north east)to [bend right=10] (4.south east);
\endtikzpicture
\hspace{1em}
\tikzpicture
\node (1) at (-0.5,0.5)[place] {};
\node (2) at (0.5,0.5)[place] {};
\node (3) at (0.5,-0.5)[place] {};
\node (4) at (-0.5,-0.5)[place] {};
\node (8) at (-0.5,-1.5)[place] {};
\node (9) at (-0.5,-2.5)[place] {};
\draw [right] (2) to (1);
\draw [right] (3) to (2);
\draw [right] (4) to (2);
\draw [right] (4) to (3);
\draw [-] (-0.5,-2.5) arc (360:0:3pt);
\draw [-] (0.75,-0.5) arc (360:0:3pt);
\draw [nleft] (1.south west) to [bend right=10] (4.north west);
\draw [nleft] (4.north east)to [bend right=10] (1.south east);
\draw [nleft] (4.south west) to [bend right=10] (8.north west);
\draw [nleft] (8.north east)to [bend right=10] (4.south east);
\draw [nleft] (8.south west) to [bend right=10] (9.north west);
\draw [nleft] (9.north east)to [bend right=10] (8.south east);
\endtikzpicture
\hspace{2em}

\end{center}
\caption{Spectrally arbitrary triangle extensions $\cT_{3,1},\cT_{3,3}$ and $\cT_{4,2}$.}\label{BNM}
\end{figure}
}\end{example}

\begin{remark}\label{hyp} {\rm If the hypotheses (i) and (ii) of Theorem~\ref{triangle} do not hold, then a triangle extension
of a spectrally arbitrary pattern could fail to give a spectrally arbitrary pattern. For example
the triangle extension $\cV'$ in Figure~\ref{V} is not a spectrally arbitrary nonzero pattern 
(see~\cite[Prop. 2.4]{CV2}). 
While the two hypotheses in Theorem~\ref{triangle} are sufficient conditions, they are not necessary conditions to 
obtain a spectrally arbitrary pattern: the
pattern $\cV''$ in Figure~\ref{V} is spectrally arbitrary (see the second pattern in the last row of Appendix A of \cite{CM})
and is a triangle extension of $\cV_3$ which fails condition (i). 
\begin{figure}[ht]
\begin{center}
\tikzpicture
\node (1) at (-0.5,0.5)[place] {};
\node (2) at (0.5,0.5)[place] {};
\node (4) at (-0.5,-0.5)[place] {};

\draw [right] (2) to (1);
\draw [right] (4) to (2);

\draw [-] (-0.5,0.5) arc (360:0:3pt);
\draw [-] (0.75,0.5) arc (360:0:3pt);
\draw [nleft] (1.south west) to [bend right=10] (4.north west);
\draw [nleft] (4.north east)to [bend right=10] (1.south east);
\endtikzpicture
\hspace{2em}
\tikzpicture
\node (1) at (-0.5,0.5)[place] {};
\node (2) at (0.5,0.5)[place] {};
\node (4) at (-0.5,-0.5)[place] {};
\node (3) at (-1.5,0.5)[place]{};

\draw [right] (2) to (1);
\draw [right] (4) to (2);
\draw [right] (3) to (4);
\draw [right] (1) to (3);

\draw [-] (-1.5,0.5) arc (360:0:3pt);
\draw [-] (0.75,0.5) arc (360:0:3pt);
\draw [nleft] (1.south west) to [bend right=10] (4.north west);
\draw [nleft] (4.north east)to [bend right=10] (1.south east);
\endtikzpicture
\hspace{2em}
\tikzpicture
\node (1) at (-0.5,0.5)[place] {};
\node (2) at (0.5,0.5)[place] {};
\node (4) at (-0.5,-0.5)[place] {};
\node (3) at (-1.5,0.5)[place]{};

\draw [right] (2) to (1);
\draw [right] (4) to (2);
\draw [right] (4) to (3);
\draw [right] (3) to (1);

\draw [-] (-1.5,0.5) arc (360:0:3pt);
\draw [-] (0.75,0.5) arc (360:0:3pt);
\draw [nleft] (1.south west) to [bend right=10] (4.north west);
\draw [nleft] (4.north east)to [bend right=10] (1.south east);
\endtikzpicture
\caption{$\cV_3$ and triangle extensions $\cV'$ and $\cV''$.}\label{V}
\end{center}
\end{figure}}
\end{remark}
\section{Triangle extensions for inertially arbitrary patterns}\label{triangleIAP}

The \emph{inertia} of a matrix $A$ is the ordered triple $(a,b,c)$ where $a$ (resp. b) is the number of eigenvalues
of $A$ with positive (resp. negative) real part, and $c$ is the number of eigenvalues with zero real part.
The \emph{refined inertia} of $A$ is $\ri{A}=(a,b,c_1,c_2)$ with $a$ and $b$ as above, but $c_1$ is the number of eigenvalues
of $A$ that are zero and $c_2$ is the number of eigenvalues that are purely imaginary (and hence $c=c_1+c_2$).
An order $n$ pattern $\cA$ is \emph{inertially arbitrary} if for every choice of $(a,b,c)$ with $a+b+c=n$ there is some matrix 
$A\in \cA$ with inertia $(a,b,c)$.
We next make use of the following theorem to apply triangle extensions to 
inertially arbitrary patterns. 

\begin{theorem}{\rm\cite[Theorem $2.13$]{CF}}\label{NJ-IAP}
Let $A$ be an order $n$ matrix with pattern $\cA$ and $\ri{A}=(0,0,c_1,c_2)$ for some $c_1\geq 2$.
If $A$ allows a full-rank Jacobian
then every superpattern of $\cA$ is inertially arbitrary.
\end{theorem}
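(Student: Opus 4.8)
The plan is to mirror the proof of Theorem~\ref{NJ}, with $A$ and its characteristic polynomial $p_A$ playing the role that the nilpotent matrix and $z^n$ play there; the new features are a non-nilpotent base matrix and the bookkeeping of inertias. Since $\ri{A}=(0,0,c_1,c_2)$, every eigenvalue of $A$ lies on the imaginary axis: the eigenvalue $0$ has multiplicity $c_1\ge 2$, and the remaining eigenvalues form conjugate pairs $\pm i\beta_1,\dots,\pm i\beta_{c_2/2}$ with each $\beta_j\neq 0$. In particular $c_2$ is even, $c_1+c_2=n$, and $p_A(z)=z^{c_1}\prod_{j=1}^{c_2/2}(z^2+\beta_j^2)$.

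First I would record the analytic content of the hypothesis. Let $F\colon\R^m\to\R^n$ send the entries occupying the $m$ nonzero positions of $A$ to the coefficient vector of the characteristic polynomial of the resulting matrix, and let $\mathbf a$ be the point corresponding to $A$, so that $DF(\mathbf a)=J_{X=A}$ has rank $n$. Fix a neighbourhood $V$ of $\mathbf a$ small enough that every point of $V$ is the entry-vector of a matrix with pattern $\cA$ (possible since nonzeroness and the signs of the designated entries are preserved under small perturbations). As $F$ is a submersion at $\mathbf a$, the set $F(V)$ is open and contains the coefficient vector of $p_A$, hence contains a ball $U$ about it. Consequently it suffices, for each prescribed inertia $(a,b,c)$ with $a,b,c\ge 0$ and $a+b+c=n$, to produce a monic real degree-$n$ polynomial of inertia $(a,b,c)$ whose coefficient vector lies in $U$; any $F$-preimage of it in $V$ is then a matrix in $\cA$ with inertia $(a,b,c)$.

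Such a polynomial I would obtain by perturbing $p_A$. Pick nonnegative integers $k_1,k_2,k_3$ with $k_1+k_2+k_3=c_2/2$ and set $a'=a-2k_1$, $b'=b-2k_2$, $c'=c-2k_3$, so $a'+b'+c'=n-c_2=c_1$. For $s>0$ let $g_s(z)=(z-s)^{a'}(z+s)^{b'}z^{c'}$, a monic real polynomial of degree $c_1$ of inertia $(a',b',c')$ (when $a',b',c'\ge 0$) with $g_s\to z^{c_1}$ coefficientwise as $s\to 0^+$, and set
\begin{multline*}
r_s(z)=g_s(z)\prod_{j=1}^{k_1}(z^2-2sz+s^2+\beta_j^2)\\
\times\prod_{j=k_1+1}^{k_1+k_2}(z^2+2sz+s^2+\beta_j^2)\prod_{j=k_1+k_2+1}^{c_2/2}(z^2+\beta_j^2).
\end{multline*}
Each factor converges to the matching factor of $p_A$ as $s\to 0^+$, so the coefficient vector of $r_s$ converges to that of $p_A$ and thus lies in $U$ once $s$ is small. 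For such $s>0$ the roots of $r_s$ are $s$ (multiplicity $a'$), $-s$ (multiplicity $b'$), $0$ (multiplicity $c'$), together with the conjugate pairs $s\pm i\beta_j$, the pairs $-s\pm i\beta_j$, and the pairs $\pm i\beta_j$; hence the inertia of $r_s$ equals $(a'+2k_1,b'+2k_2,c'+2k_3)=(a,b,c)$, provided $a',b',c'\ge 0$. This would show $\cA$ is inertially arbitrary.

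The step I expect to be the main obstacle is confirming that $c_1\ge 2$ makes a choice of $k_1,k_2,k_3$ with $a',b',c'\ge 0$ possible for every admissible $(a,b,c)$. A conjugate pair $\pm i\beta_j$ can only leave the imaginary axis as a conjugate pair, so it adds $2$ to a single inertia coordinate, and the only fine adjustment available is through the $c_1$ zero eigenvalues. The requirement is to find nonnegative integers $k_1\le\lfloor a/2\rfloor$, $k_2\le\lfloor b/2\rfloor$, $k_3\le\lfloor c/2\rfloor$ summing to $c_2/2$, which is possible exactly when $\lfloor a/2\rfloor+\lfloor b/2\rfloor+\lfloor c/2\rfloor\ge c_2/2$. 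Writing the left side as $(n-p)/2$, where $p\in\{0,1,2,3\}$ is the number of odd entries among $a,b,c$, and using $p\equiv n\equiv c_1\pmod 2$, this inequality reduces to $c_1\ge p$, which holds precisely under the hypothesis $c_1\ge 2$ (if $n$ is odd then $c_1$ is odd, so $c_1\ge 2$ forces $c_1\ge 3\ge p$). Finally, to promote the conclusion to an arbitrary superpattern $\cB\supseteq\cA$, I would argue as in Theorem~\ref{NJ}: fill the extra nonzero positions of $\cB$ with entries $\pm\epsilon$ to obtain $B_\epsilon\in\cB$; for small $\epsilon$ the columns of the Jacobian indexed by the positions of $\cA$ stay close to those of $J_{X=A}$ and so keep rank $n$, and the coefficient vector of $B_\epsilon$ stays close to that of $p_A$, so by the submersion property at $B_\epsilon$ there is a matrix $B\in\cB$ with characteristic polynomial exactly $p_A$. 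Then $\ri{B}=(0,0,c_1,c_2)$ with $c_1\ge 2$ and $B$ allows a full-rank Jacobian, so the argument above applies to $\cB$.
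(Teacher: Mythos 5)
This theorem is stated in the paper only as a citation of \cite[Theorem 2.13]{CF}; the paper gives no proof of its own, so there is nothing in the source to compare your argument against line by line. Judged on its own merits, your proof is correct. The submersion/Implicit Function Theorem step accurately captures what ``allows a full-rank Jacobian'' buys you: the coefficient map is locally open near $A$, so it suffices to exhibit, arbitrarily coefficient-close to $p_A=z^{c_1}\prod_j(z^2+\beta_j^2)$, a monic real polynomial of each prescribed inertia. Your explicit family $r_s$ does this, with each nonreal conjugate pair $\pm i\beta_j$ pushed off the axis in a chosen direction (contributing $2$ to one inertia coordinate) and the $c_1$ zero roots providing the single-unit adjustments; the feasibility count $\lfloor a/2\rfloor+\lfloor b/2\rfloor+\lfloor c/2\rfloor\ge c_2/2$, reduced via parity to $c_1\ge p$ where $p$ is the number of odd entries of $(a,b,c)$, is handled correctly and is exactly where the hypothesis $c_1\ge2$ enters. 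The passage to superpatterns by filling the extra positions with $\pm\epsilon$, reacquiring the exact polynomial $p_A$ by the submersion property at $B_\epsilon$, and invoking continuity of the Jacobian rank is the standard maneuver and is sound. This is the natural route and is almost certainly the same strategy as in \cite{CF}: hit a nearby polynomial of every inertia type rather than (as in the spectrally arbitrary case) combine the local surjectivity with a scaling argument to hit every polynomial.
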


\begin{theorem}\label{triangle2}
Given pattern $\cA$, suppose $A\in \A$ is a matrix with refined inertia $\ri{A}=(0,0,c_1,c_2)$ for some $c_1\geq 2$
and suppose $A$ allows a full-rank Jacobian.
Let $(u,v)$, $u\neq v$, be an arc on a composite $n$-cycle of $D(\cA)$ with a loop at $u$.
Suppose  
\begin{enumerate}
\item[(i)] that any cycle that contains $u$ 
also contains $(u,u)$ or $(u,v)$, and 
\item[(ii)] the sum of the weights of the composite $n$-cycles using edge $(u,v)$ in $A$ is nonzero.
\end{enumerate}
If  
$\cB$ is a (signed) triangle extension of $\cA$ at $(u,v)$, 
then  
every superpattern of $\cB$ is inertially arbitrary. 
\end{theorem}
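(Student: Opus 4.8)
The plan is to mimic the proof of Theorem~\ref{triangle} almost verbatim, substituting Theorem~\ref{NJ-IAP} for Theorem~\ref{NJ} at the very end. First I would label the vertices of $\cA$ so that $u=n$ and $v=n-1$, take the matrix $A\in\cA$ with $\ri{A}=(0,0,c_1,c_2)$, $c_1\geq 2$, that allows a full-rank Jacobian, and set up the variable matrix $X$ with $m$ variables as in Section~\ref{def}. Exactly as before, write $k=X_{n,n}$ and $d=X_{n,n-1}$, and construct the triangle-extended matrix $B\in\cB$ of order $n+1$ by setting $B_{ij}=A_{ij}$ for $i,j\leq n$ except $B_{n,n}=0$, and $B_{n+1,n+1}=A_{n,n}$, $B_{n+1,n-1}=1$, $B_{n,n+1}=A_{n,n-1}A_{n,n}$ (adjusting signs for a signed triangle extension). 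Introduce the enlarged variable matrix $Y$ with the extra variable $x_{m+1}=Y_{n,n+1}$.

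The combinatorial heart of the argument is identical to that in Theorem~\ref{triangle}: every composite cycle of $D(\cA)$ reappears as a composite cycle of $D(\cB)$, and condition~(i) lets us pair the remaining composite cycles of $D(\cB)$ into those using arc $(u,v)$ and those using the new loop $(w,w)$, yielding the identity
\begin{equation*}
p_Y(z)=zp_X(z)+\sum_{r=3}^{n}S_r(x_{m+1}-dk)z^{n-r+1}+S_{n+1}(x_{m+1}-dk),
\end{equation*}
with $S_r$ not involving $d$ or $k$. Since at $\mathbf{x}=\mathbf{a}$ we have $x_{m+1}=dk$, the polynomial $p_B$ equals $z\,p_A(z)$; hence the eigenvalues of $B$ are those of $A$ together with an extra $0$. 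Therefore $\ri{B}=(0,0,c_1+1,c_2)$, and since $c_1\geq 2$ already, certainly $c_1+1\geq 2$, so $B$ still satisfies the hypothesis of Theorem~\ref{NJ-IAP}. The Jacobian computation is also unchanged: $J_{Y=B}$ has the block form $\left[\begin{smallmatrix} J_X & \mathbf 0\\ \mathbf 0^T & 0\end{smallmatrix}\right]+(x_{m+1}-dk)M+\mathbf{s}\mathbf{t}^T$ with $\mathbf{s}^T=[0,0,S_3,\ldots,S_{n+1}]$ and $\mathbf{t}^T=[0,\ldots,0,-k,-d,1]$; evaluating at $\mathbf{x}=\mathbf{a}$ kills the $M$-term, condition~(ii) gives $S_{n+1}\neq 0$, and $J_{Y=B}$ is column-equivalent to a block triangular matrix with diagonal blocks $J_{X=A}$ (rank $n$) and $[1]$. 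Thus $J_{Y=B}$ has rank $n+1$, i.e. $B$ allows a full-rank Jacobian.

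To finish, I apply Theorem~\ref{NJ-IAP} to $B\in\cB$: since $\ri{B}=(0,0,c_1+1,c_2)$ with $c_1+1\geq 2$ and $B$ allows a full-rank Jacobian, every superpattern of $\cB$ is inertially arbitrary. I do not expect a serious obstacle here, since the construction transports verbatim; the only point needing a word of care is verifying that the refined inertia of $B$ genuinely is $(0,0,c_1+1,c_2)$ — that is, that appending the triangle does not perturb the existing eigenvalues — which follows immediately from $p_B(z)=z\,p_A(z)$, a consequence of $x_{m+1}-dk=0$ in $B$. One should also note, as in Section~\ref{triangleSAP}, that in the signed case the sign condition $\sgn(\cB_{u,w})\sgn(\cB_{w,v})=\sgn(\cA_{u,u})\sgn(\cA_{u,v})$ is exactly what guarantees the entries $B_{n+1,n-1}$ and $B_{n,n+1}$ defined above can be chosen with the prescribed signs while keeping $B_{n,n+1}=A_{n,n-1}A_{n,n}$ consistent with $\cB$.
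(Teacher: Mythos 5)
Your proposal is correct and follows exactly the paper's argument: the paper also proves this by observing the construction of Theorem~\ref{triangle} carries over verbatim, that the resulting matrix has refined inertia $(0,0,c_1+1,c_2)$, and that Theorem~\ref{NJ-IAP} is applied in place of Theorem~\ref{NJ}. Your write-up is merely more explicit (e.g., spelling out that $p_B(z)=z\,p_A(z)$ and hence the extra eigenvalue is $0$), which is precisely the observation the paper's one-line proof appeals to.
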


\begin{proof}
The proof of the result is the same as Theorem~\ref{triangle} except we need to observe that
 the resultant triangle extension will
have refined inertia $(0,0,c_1+1,c_2)$,
and that we need to apply Theorem~\ref{NJ-IAP} instead of Theorem~\ref{NJ}.
\end{proof}

\begin{remark}{\rm 
As with Theorem~\ref{triangle}, Theorem~\ref{triangle2} can be applied recursively to the
new edges of a triangle extension.
}
\end{remark}

\begin{example} {\rm 
For this example, we recall certain inertially arbitrary patterns
$\cW^*_n$ and $\cW_n$ introduced in \cite{CVV} and discussed in \cite{CF} and \cite{CGKOVV}.
We observe here that Theorem~\ref{triangle2} reduces the work
needed to determine that these patterns are inertially arbitrary.
In particular, note that the pattern $\cW^*_{n}$ (and $\cW_{n+1}$)
 is a triangle extension of $\cW^*_{n-1}$ (resp. signed triangle extension of $\cW_{n}$)
 for all $n\geq 6$. Further, $\cW^*_5$ 
 allows refined inertia $(0,0,1,4)$ as noted in \cite{CGKOVV} 
 (and likewise $\cW_6$  
 allows refined inertia $(0,0,2,4)$ as noted in \cite{CF}). By recursive application
 of Theorem~\ref{triangle2}, we can observe that every superpattern
 of  $\cW_n$ and $\cW^*_{n-1}$ is inertially arbitrary for all $n\geq 6$. 
}\end{example}

\section{Concluding remarks: zero-nonzero pattern extensions}\label{znz}

We now develop a graph extension result for zero-nonzero patterns.
Recall that a zero-nonzero pattern $\A$ has entries in $\{0, *, \oast \}$.
A real matrix $A$ has zero-nonzero pattern $\cA$ if $A_{i,j}=0$ implies $\cA \neq *$ and 
$A_{i,j}\neq 0$ implies $\cA\neq 0$.
The triangle extension method of the previous sections works for zero-nonzero patterns. But there
is more flexibility that can be achieved for extending zero-nonzero patterns.
One adjustment is that we do not move a loop in the zero-nonzero pattern extension
described in Theorem~\ref{triangle3}.  In particular, given $u,v$ are distinct
vertices of $G$, we say that digraph $H$ is a \emph{$\oast$-extension} of $G$ at $(u,v)$
 if $H$ can be obtained from $G$ by adding a vertex $w$ and the two arcs $(u,w)$ and $(w,v)$ with at least one of the new arcs
labeled $\oast$ with the other label in $\{\oast,*\}$. 
If $\cA$ and $\cB$ are patterns corresponding
to the digraphs $G$ and $H$ respectively, we will say $\cB$ is a \emph{$\oast$-extension}
of $\cA$ at $(u,v)$.
We use the notation of Remark~\ref{det} in the following theorem.

\begin{theorem}\label{triangle3}
Suppose $\cA$ is a zero-nonzero pattern and $A\in\A$ is a nilpotent matrix that allows a full-rank Jacobian. 
Suppose $u\neq v$ and 
$\det A(u,v) \neq 0$.  
If  $\cB$ is a $\oast$-extension of $\cA$ at $(u,v)$,  
then every superpattern of $\cB$ is spectrally arbitrary. 
\end{theorem}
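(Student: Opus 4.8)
The plan is to mimic the proof of Theorem~\ref{triangle}, but to exploit the extra freedom that the $\oast$ entries provide so that we no longer need condition~(i) (the strong-component/cycle-structure hypothesis) nor a loop at $u$. Label the vertices so that $u=n$ and $v=n-1$, and let $A\in\cA$ be the given nilpotent matrix allowing a full-rank Jacobian, with $X$ the variable matrix whose Jacobian $J_{X=A}$ has rank $n$. As before I would build $B\in\cB$ of order $n+1$ by copying $A$ on the first $n$ rows and columns, placing the new vertex $w=n+1$, and setting the two new entries $B_{n,n+1}$ and $B_{n+1,n-1}$; since at least one of the arcs $(u,w),(w,v)$ is labeled $\oast$, I have complete freedom to choose the corresponding real value (in particular it may be zero or negative). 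The idea is to choose $B_{n+1,n-1}=1$ and to pick $B_{n,n+1}$ so that the ``$w$-path'' arc $(n,n+1)\to(n+1,n-1)$ exactly reproduces, in every composite cycle of $D(\cB)$ through $w$, the contribution that the arc $(n,n-1)$ made in $D(\cA)$; this makes $B$ nilpotent.

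The key observation is that every cycle of $D(\cB)$ that uses the new vertex $w$ must enter $w$ via $(u,w)$ and leave via $(w,v)$, so it has the form of a cycle of $D(\cA)$ through the arc $(u,v)$ with that arc subdivided by $w$ (plus the loop at $w$ if we choose to allow one — but since we do \emph{not} move a loop, $w$ carries no loop, which is exactly why condition~(i) of Theorem~\ref{triangle} is unnecessary here). Consequently, writing $p_{X}$ for the characteristic polynomial of $X$ and introducing the single new variable $x_{m+1}=Y_{n,n+1}$, one gets an identity of the shape
\begin{equation*}
p_{Y}(z) = z\,p_{X}(z) + \big(x_{m+1}-c\big)\!\!\sum_{r}T_r(x_1,\dots,x_m)\,z^{n-r+1},
\end{equation*}
where $c$ is the value of $B_{n,n+1}$ making $B$ nilpotent and the $T_r$ are polynomials in the \emph{old} variables only; here the relevant top-degree coefficient $T_{n+1}$ equals $\pm\det A(u,v)$, which is nonzero by hypothesis. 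This is the place where the assumption $\det A(u,v)\neq 0$ does all the work: it guarantees that differentiating $p_Y$ in the new variable $x_{m+1}$ produces a nonzero constant term, so that the new column of the Jacobian, evaluated at $B$, has a nonzero entry in the row corresponding to $f_{n+1}$ and is otherwise expressible through the old columns.

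From here the argument is the same linear-algebra endgame as in Theorem~\ref{triangle}: evaluating at $x_{m+1}=c$ kills the ``$M$-type'' correction terms, and $J_{Y=B}$ becomes column-equivalent to a block-triangular matrix with diagonal blocks $J_{X=A}$ (rank $n$) and $[\,T_{n+1}|_{A}\,]=[\pm\det A(u,v)]$ (rank $1$), hence has rank $n+1$. Since $B\in\cB$ is nilpotent and allows a full-rank Jacobian, Theorem~\ref{NJ} applies to the zero-nonzero setting verbatim and every superpattern of $\cB$ is spectrally arbitrary. I expect the main obstacle to be purely bookkeeping: verifying carefully that the only new composite cycles are the subdivisions of $(u,v)$-cycles (so that there is genuinely a single new variable and the claimed polynomial identity holds with coefficients independent of $B_{n,n-1}$ and of the new data), and pinning down the sign/constant so that $T_{n+1}|_{A}=\pm\det A(u,v)$; the subtlety that distinguishes this theorem from Theorem~\ref{triangle} is precisely that the absence of a loop at $w$ removes the need to pair cycles through $(u,u)$ with cycles through $(u,v)$, which is what let us drop hypothesis~(i).
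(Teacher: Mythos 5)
Your overall strategy is the same as the paper's: build $B$ by adjoining the vertex $w$ with the two new arcs, express $p_Y$ as $z\,p_X$ plus terms proportional to the one new variable $x_{m+1}$, identify the coefficient of the new variable in the bottom row of the Jacobian with $\pm\det A(u,v)$, and reduce to a block-triangular argument as in Theorem~\ref{triangle}. That matches the paper's proof of Theorem~\ref{triangle3} exactly in structure.

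There is, however, a genuine error in the step where you choose the value of $B_{n,n+1}$. You propose to ``pick $B_{n,n+1}$ so that the $w$-path $\ldots$ exactly reproduces $\ldots$ the contribution that the arc $(n,n-1)$ made in $D(\cA)$,'' claiming this makes $B$ nilpotent. But a $\oast$-extension \emph{adds} the arcs $(u,w)$ and $(w,v)$ without deleting $(u,v)$, so if $\cA_{u,v}\neq 0$ the cycles through $(u,v)$ are still present in $D(\cB)$; making the $w$-path carry the same weight would double-count those contributions, and $B$ would in general fail to be nilpotent. The correct choice, and the one the paper actually makes, is $c=0$: set $B_{n,n+1}=0$ (this is precisely why at least one of the new arcs must be labeled $\oast$), so that every cycle through $w$ has weight zero and $p_B(z)=z\,p_A(z)=z^{n+1}$. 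Your displayed identity $p_Y(z)=z\,p_X(z)+(x_{m+1}-c)\sum_r T_r z^{n-r+1}$ is only an identity when $c=0$, since the cycle contributions are genuinely homogeneous of degree one in $x_{m+1}$ with no additive shift; so the downstream Jacobian computation is fine once $c$ is fixed to $0$, but as written the selection of $c$ is incorrect and your stated justification for nilpotence of $B$ does not hold.

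A smaller imprecision: you describe the $w$-cycles as ``cycles of $D(\cA)$ through the arc $(u,v)$ with that arc subdivided,'' but the definition allows $(u,v)$ not to be an arc of $\cA$ at all (the paper highlights this added flexibility in a remark). The $w$-cycles are more accurately described as closed walks formed by a path in $D(\cA)$ from $v$ to $u$ closed up through $w$; this description is what actually underlies the identification of $T_{n+1}$ with $\pm\det A(u,v)$.
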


\begin{proof} The proof is the same as Theorem~\ref{triangle} with some simplification.
In particular, with no loop moved to $w$, the
equation (\ref{charY}) can simplify to
\begin{equation}\label{charY2}
{p}_{Y}(z) = zp_X(z) + 
\sum_{r=3}^{n}S_rx_{m+1}z^{n-r+1} + x_{m+1}\left(\det A(u,v)\right)  
\end{equation}
assuming one of $(u,w)$ and $(w,v)$ is weighted 1 and the
other $x_{m+1}$.
In this case, $B$ can be seen to be nilpotent if $x_{m+1}$ is set to zero.
Finally, $\mathbf{s}^T=[0,0,S_3,\ldots,S_n, \det\left(A(u,v)\right)]$
and 
$\mathbf{t}^T=[0,\ldots,0,1]$, so that the $J_{Y=B}$ will have full rank.
\end{proof}

\begin{remark}{\rm 
As with Theorem~\ref{triangle}, Theorem~\ref{triangle3} can be applied recursively on
one of the two new edges, using a nilpotent realization for which 
the second added edge is nonzero. 
}
\end{remark}

\begin{example}{\rm 
One can show that the 
nested intercyclic digraphs described in~\cite{EKSV} are $\oast$-extensions of the pattern
$\cC_2$, with digraph given in Figure~\ref{C2}, starting with edge $(2,1)$.
It follows from Theorem~\ref{triangle3} and \cite[Theorem 3.2]{EKSV} that 
the Fiedler companion patterns characterized in~\cite{EKSV} 
are recursively constructed spectrally arbitrary $\oast$-extensions of
$\cC_2$.
\begin{figure}[ht]
\begin{center}
\begin{tabular}{cc}
$\cC_2 = 
\left[\begin{array}{cc}
\oast & *\\
\oast&0
\end{array}\right]$ \qquad
&
\qquad
\tikzpicture
\node (1) at (-0.5,-0.5)[place] {};
\node (4) at (1,-0.5)[place]{}; 
\draw [-] (-0.5,-0.5) arc (360:0:3pt);
\draw [->](1) to [bend left=15] (4);
\draw [->](4) to [bend left=15] (1);
\endtikzpicture
\end{tabular}
\end{center}\caption{The pattern $\cC_2$ and its digraph.}\label{C2}
\end{figure}
}
\end{example}

\begin{remark}{\rm 
Note that in the definition of $\oast$-extension, we do not require $(u,v)$ to be an arc of 
the digraph, unlike for triangle extensions. This means that there is much more flexibility 
to construct spectrally arbitrary $\oast$-extensions than triangle extensions.
}\end{remark}

\begin{example}{\rm 
Recall that the pattern $\cT_3$ is spectrally arbitrary and allows a full-rank Jacobian. There is only one
transversal in the matrix obtained by deleting row 3 and column 1 of $\cT_3$. 
Thus, the determinant condition of Theorem~\ref{triangle3} is satisfied and 
the pattern with the digraph in Figure~\ref{new} is a spectrally arbitrary
$\oast$-extension of $\cT_3$ at $(3,1)$.
\begin{figure}[ht]
\begin{center}
\begin{tabular}{cc}
$\left[\begin{array}{cccc}
\oast & \oast&0&0\\
\oast&0 &\oast&0\\
0&\oast&\oast&\oast\\
\oast&0&0&0
\end{array}\right]$ \qquad
&
\qquad
\tikzpicture
\node (1) at (-0.5,-0.5)[place] {};
\node (4) at (1,-0.5)[place] {};
\node (8) at (2.5,-0.5)[place]{};
\node (3) at (1,-1.5)[place]{};
\draw [right] (3) to (1);
\draw [right] (8) to (3);
\draw [-] (-0.5,-0.5) arc (360:0:3pt);
\draw [-] (2.75,-0.5) arc (360:0:3pt);
\draw [->](1) to [bend left=15] (4);
\draw [->](4) to [bend left=15] (1);
\draw [->](4) to [bend left=15] (8);
\draw [->](8) to [bend left=15] (4);
\endtikzpicture
\end{tabular}\end{center}
\caption{An $\oast$-extension of $\cT_3$}\label{new}
\end{figure}

}\end{example}

Finally, as was done in Section~\ref{triangleIAP}, we can apply an adjusted method
to obtain inertially arbitrary $\oast$-extensions of inertially
arbitrary patterns:

\begin{corollary}
Let $\cA$ be a zero-nonzero pattern. Suppose $A\in\A$ is a matrix with refined
inertia $\ri{A}=(0,0,c_1,c_2)$ for some $c_1\geq 2$ and $A$ allows a full-rank Jacobian
(and hence $\A$ is inertially arbitrary). 
Suppose $u\neq v$ and 
$\det A(u,v) \neq 0$.  
If  
$\cB$ is an $\oast$-extension of $\cA$ at $(u,v)$,  
then  
every superpattern of $\cB$ is inertially arbitrary. 
\end{corollary}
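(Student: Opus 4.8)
The plan is to mirror exactly the two-step upgrade that was already carried out for spectrally arbitrary patterns: first observe that the $\oast$-extension of a nilpotent-type realization behaves combinatorially just as in Theorem~\ref{triangle3}, and then invoke the inertial analogue Theorem~\ref{NJ-IAP} in place of Theorem~\ref{NJ}. So I would begin by taking the matrix $A\in\cA$ guaranteed by the hypothesis, with $\ri{A}=(0,0,c_1,c_2)$, $c_1\geq 2$, and allowing a full-rank Jacobian. As in the proof of Theorem~\ref{triangle3}, relabel so that $u=n$, $v=n-1$, form the extended matrix $B\in\cB$ of order $n+1$ by adjoining the new vertex $w=n+1$ with the two arcs $(u,w)$ and $(w,v)$, one weighted $1$ and the other a new variable $x_{m+1}$, and let $Y$ be the corresponding variable matrix.

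The key computation is already supplied by equation~(\ref{charY2}): since no loop is moved, the characteristic polynomial of $Y$ satisfies
\[
p_Y(z)=z\,p_X(z)+\sum_{r=3}^{n}S_r x_{m+1}z^{n-r+1}+x_{m+1}\det A(u,v),
\]
so setting $x_{m+1}=0$ gives $p_Y(z)=z\,p_A(z)$. This is the only place where the inertial setting differs from the nilpotent setting, and it is the crux: I need to check that the eigenvalues of $B$ (at $x_{m+1}=0$) are precisely those of $A$ together with one extra zero. Since $A$ has refined inertia $(0,0,c_1,c_2)$, multiplying its characteristic polynomial by $z$ yields refined inertia $(0,0,c_1+1,c_2)$, and because $c_1+1\geq 3\geq 2$ the hypothesis $c_1\geq 2$ of Theorem~\ref{NJ-IAP} is satisfied for $B$. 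Then, exactly as in Theorem~\ref{triangle3}, the Jacobian $J_{Y=B}$ is column-equivalent to a block triangular matrix with diagonal blocks $J_{X=A}$ (full rank $n$ by hypothesis) and $[\det A(u,v)]$ (nonzero by the assumption $\det A(u,v)\neq 0$), hence $J_{Y=B}$ has full rank $n+1$. Applying Theorem~\ref{NJ-IAP} to $B\in\cB$ then gives that every superpattern of $\cB$ is inertially arbitrary.

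I do not expect any serious obstacle: the corollary is a routine amalgamation of Theorem~\ref{triangle3} (for the combinatorial/Jacobian bookkeeping) with the observation underlying Theorem~\ref{triangle2} (that an $\oast$-extension appends a single zero eigenvalue, raising $c_1$ to $c_1+1$ while leaving $c_2$ unchanged). The only point requiring a line of care is confirming that the new vertex contributes no additional nonzero eigenvalue at the nilpotent-like evaluation point, which is immediate from~(\ref{charY2}); everything else is verbatim from the earlier proofs. Thus the proof can be stated in a couple of sentences: \emph{The proof is identical to that of Theorem~\ref{triangle3}, except that $B$ has refined inertia $(0,0,c_1+1,c_2)$ with $c_1+1\geq 2$, so we apply Theorem~\ref{NJ-IAP} in place of Theorem~\ref{NJ}.}
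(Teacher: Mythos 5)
Your proof is correct and matches the paper's intended approach exactly. The paper leaves this corollary's proof implicit (``as was done in Section~4, we can apply an adjusted method\ldots''), and what you wrote is precisely the combination the authors have in mind: the Jacobian bookkeeping of Theorem~\ref{triangle3} via equation~(\ref{charY2}), plus the observation from the proof of Theorem~\ref{triangle2} that the extension raises $c_1$ to $c_1+1$ so Theorem~\ref{NJ-IAP} applies in place of Theorem~\ref{NJ}.
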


\textbf{Acknowledgements} The third author thanks Hannah Bergsma for an initial conversation
about triangle extensions. We thank Dale Olesky and Pauline van den Driessche for comments that
improved Remarks~\ref{det} and \ref{hyp}. Research supported in part by NSERC Discovery Grant 203336.

\end{document}